\def\abs#1{|#1|}
\def\shuffle{\mathop{_{^{\sqcup\!\sqcup}}}}
\def\adots{\mathinner{\mkern2mu\raise1pt\hbox{.}
\mkern3mu\raise4pt\hbox{.}\mkern1mu\raise7pt\hbox{.}}}
\def\up#1{\raise 1ex\hbox{\footnotesize#1}}
\def\H{\mathcal{H}}
\def\span{\mathop\mathrm{span}\nolimits}
\def\path{\rightsquigarrow}
\def\Lyn{{\mathcal Lyn}}
\def\C{{\mathbb C}}
\def\R{{\mathbb R}}
\def\Q{{\mathbb Q}}
\def\Q{{\mathbb Q}}
\def\H{\mathrm{H}}
\renewcommand{\Q}{{\mathbb Q}}
\renewcommand{\R}{{\mathbb R}}
\renewcommand{\C}{{\mathbb C}}
\newcommand{\calD}{{\mathcal D}}
\newcommand{\calH}{{\mathcal H}}
\newcommand{\calL}{{\mathcal L}}
\newcommand{\calX}{{\mathcal X}}
\newcommand{\calP}{{\mathcal P}}
\newcommand{\Li}{\operatorname{Li}}
\def\Lyn{\mathcal Lyn}
\def\calG{\mathcal{G}}
\def\H{\mathrm{H}}
\def\abs#1{|#1|}
\def\scal#1#2{\langle #1\mid#2 \rangle}
\def\ncp#1#2{#1\langle #2\rangle}
\def\ncs#1#2{#1\langle \!\langle #2\rangle \!\rangle}
\def\path{\rightsquigarrow}
\def\shuffle{\mathop{_{^{\sqcup\!\sqcup}}}} 
\def\conc{\mathop{\tt conc}} 
\def\calG{{\mathcal G}}
\def\calH{{\mathcal H}}
\def\calL{{\mathcal L}}
\def\calO{{\mathcal O}}
\def\calX{{\mathcal X}}
\def\scal#1#2{\langle #1 | #2 \rangle}
\def\ncs#1#2{#1\langle\langle #2\rangle\rangle}
\def\ncp#1#2{#1\langle #2\rangle}
\def\Li{\mathrm{Li}}
\gdef\stuffle{\;%
  \setlength{\unitlength}{0.0125cm}%
  \begin{picture}(20,10)(220,580)
  \thinlines
  \put(220,592){\line( 0,-1){ 10}}
  \put(220,582){\line( 1, 0){ 20}}
  \put(240,582){\line( 0, 1){ 10}}
  \put(230,592){\line( 0,-1){ 10}}
  \put(225,587){\line( 1, 0){ 10}}
  \end{picture}\;
}
\def\rd{\triangleright}
\def\rg{\triangleleft}
\def\trl{\triangleleft}
\newcommand\rsmraise[1]{%
  \ifx#1\displaystyle .8\else
    \ifx#1\textstyle .8\else
      \ifx#1\scriptstyle .6\else
        .45%
      \fi
    \fi
  \fi}
\def\2#1{\ifnum#1<10 0\fi\the#1}
\xdef\isodayandtime{
{\2\day-\2\month-\the\year\space\2{\count0}:%
\2{\count2}}}
\def\2#1{\ifnum#1<10 0\fi\the#1}
\xdef\isodayandtime{
{\2\day-\2\month-\the\year\space\2{\count0}:%
\2{\count2}}}
\newcounter{per1}
\definecolor{MyDarkBlue}{rgb}{0,0.08,0.4}
\begin{document}

\title*{Hyperlogarithms: Functions on Free Monoids}
\author{V. Hoang Ngoc Minh}

\maketitle

\abstract{To factorize and to decompose the graphs of \textit{representative} functions on the free monoid $\calX^*$ (generated by the alphabet $\calX$) with values in the ring $A$ containing $\Q$, we examine various products of series (as concatenation, shuffle and its $\phi$-deformations) and co-products, which are such that their associated \textit{non graded} bialgebras are isomorphic, for $A$ is a field $K$, to the Sweedler's dual of the \textit{graded} noncommutative co-commutative $K$-bialgebra of polynomials.}

\section{Introduction}\label{intro}
Hopf algebras involve in algebraic geometry and topology, theory of algebraic groups and representation theory, in which representative functions with values in a field $K$ were investigated on a group $G$ \cite{abe,Hochschild}.
As in \cite{CartierPatras}, let us consider the following function on the monoid ${\cal X}^*$ generated by an alphabet ${\cal X}$
\begin{eqnarray}\label{f}
f:{\cal X}^*\longrightarrow K.
\end{eqnarray}

\begin{example}[\cite{CM}]\label{pos}
Polylogarithms (resp. harmonic sums) are holomorphic (resp. arithmetical) functions $\{\Li_{s_1,\cdots,s_r}\}_{s_1,\cdots,s_r\ge1,r\ge0}$ (resp. $\{\H_{s_1,\cdots,s_r}\}_{s_1,\cdots,s_r\ge1,r\ge0}$) defined, for any 
multiindex $(s_1,\ldots,s_r)$ in the free monoid $({\mathbb N}_{\ge1})^*$ generated by ${\mathbb N}_{\ge1}$, by
\begin{eqnarray*}
\Li_{s_1,\ldots,s_r}(z)=\sum_{n_1>\cdots>n_r>0}\frac{z^{n_1}}{n_1^{s_1}\cdots n_r^{s_r}}&\Big(\mbox{resp.}&
\H_{s_1,\ldots,s_r}(n)=\sum_{n\ge n_1>\cdots>n_r>0}\frac1{n_1^{s_1}\cdots n_r^{s_r}}\Big).
\end{eqnarray*}
Since $(s_1,\ldots,s_r)$ one-to-one corresponds to the word $x_0^{s_1-1}x_1\cdots x_0^{s_r-1}x_1$ (resp. $y_{s_1}\cdots\allowbreak y_{s_r}$) of the monoid $X^*$ (resp. $Y^*$) generated by $X=\{x_0,x_1\}$ (resp. $Y=\{y_k\}_{k\ge1}$) then $\Li_{\bullet}$ (resp. $\H_{\bullet}$) is a function, as well as on $({\mathbb N}_{\ge1})^*$ than on $X^*$ (resp. $Y^*$), to the ring of polylogarithms (resp. harmonic sums), in which $\Li_{s_1,\ldots,s_r}=\Li_{x_0^{s_1-1}x_1\ldots x_0^{s_r-1}x_1}$ (resp. $\H_{s_1,\ldots,s_r}=\H_{y_{s_1}\ldots y_{s_r}}$) and by convention, $\Li_{x_0}(z)$ stand for $\log(z)$ \cite{CM}. It turns out that $\Li_{\bullet}$ (resp. $\H_{\bullet}$) realizes an isomorphism between the shuffle (resp. quasi-shuffle) algebra of noncommutative polynomials $(\ncp{\Q}{X},\shuffle,1_{X^*})$ (resp. $(\ncp{\Q}{Y},\stuffle,1_{Y^*})$\footnote{In Section \ref{coproducts} below, $\stuffle$ will be considered as a deformation of $\shuffle$ over $\ncp{\mathbb C}{Y}$.}) and the algebra of polylogarithms (resp. harmonic sums) \cite{CM}. These holomorphic (resp. arithmetical) functions $\{\Li_w\}_{w\in X^*}$ (resp. $\{\H_w\}_{w\in Y^*}$) lie, in particular, as follows
\begin{eqnarray*}
(1-z)^{-1}{\Li_{x_0^{s_1-1}x_1\ldots x_0^{s_r-1}x_1}(z)}&=&
\sum_{n\ge0}\H_{y_{s_1}\ldots y_{s_r}}(n)z^n.
\end{eqnarray*}
\end{example}

The function $f$ in \eqref{f} is a \textit{representative} if and only if there is finitely many functions $\{f'_i,f''_i\}_{i\in I_{finite}}$ of $K^{\calX^*}$, which can be choosen to be \textit{representative} functions such that, for any $u$ and $v\in\calX^*$, one has \cite{CartierPatras}
\begin{eqnarray}\label{representative}
f(uv)=\sum_{i\in I_{finite}}f'_i(u)f''_i(v).
\end{eqnarray}
With the notations in \eqref{f}--\eqref{representative}, the coproduct of \textit{representative} function $f$ can be defined in duality with the product in $\calX^*$ (\textit{i.e.} the concatenation, denoted by $\tt conc$ and omitted when there is no ambiguity) as follows \cite{CartierPatras}
\begin{eqnarray}\label{Delta_f}
\forall u,v\in\calX^*,
&\Delta_{\conc}(f)(u\otimes v)=f(uv),
&\Delta_{\conc}(f)=\sum_{i\in I_{finite}}f'_i\otimes f''_i.
\end{eqnarray}
The graph of $f$ in \eqref{f}, viewed as a noncommutative generating series over $\calX$ and with coefficients in $K$, is described as follows
\begin{eqnarray}\label{ngs}
S=\sum_{w\in\calX^*}\scal{S}{w}w,&\mbox{where}&\scal{S}{w}=f(w).
\end{eqnarray}

Any series $S$ is defined as a function $\calX^*\longrightarrow K$ mapping $w$ to $\scal{S}{w}$, so-called \textit{coefficient} of $w$ in $S$, and its \textit{graph} is the infinite sum on $\{\scal{S}{w}w\}_{w\in{\cal X}^*}$ \cite{berstel}. It is \textit{rational} if and only if there is an interger $n$ and a triplet $(\nu,\mu,\eta)$, with $\nu\in M_{1,n}(K)$ and $\eta\in M_{n,1}(K)$ and $\mu:{\cal X}^*\longrightarrow M_{n,n}(K)$, such that $\scal{S}{w}=\nu\mu(w)\eta$ \cite{berstel}. The triplet $(\nu,\mu,\eta)$ is called \textit{linear representation}\footnote{By left or right shifts (see Definition \ref{dec1} below), minimization algorithms provide minimal linear representations of the smallest rank \cite{berstel}.} of rank $n$ of $S$ \cite{berstel} and the morphism $\mu$ is called \textit{linear representation} of the monoid ${\cal X}^*$. For any $0\le i\le n$, letting $G_i$ (resp. $D_i$) be a rational series admitting $(\nu,\mu,e_i)$ (resp. $({}^te_i,\mu,\eta)$), with $e_i\in M_{1,n}(A)$ and $\underset{\hskip1cm\uparrow i}{{}^te_i={\begin{matrix}(0&\ldots&0&1&0&\ldots&0)\end{matrix}}}$, as linear representation of rank $n$ and extending $\Delta_{\conc}$ over the $K$-algebra of series, one has \cite{CM}
\begin{eqnarray}\label{Delta_S}
\Delta_{\conc}(S)=\sum_{1\le i\le n}G_i\otimes D_i.
\end{eqnarray}
Hence, the function $f$ in \eqref{f} is representative if and only if the series $S$ in \eqref{ngs} is rational and is said to be representative (so do $G_i$ and $D_i$) \cite{legsloops}. By \eqref{Delta_f}--\eqref{Delta_S}, one also obtains (see also Definition \ref{rational}, Theorem \ref{KS} and Proposition \ref{PQ} below)
\begin{eqnarray}
\forall u,v\in\calX^*,\quad
\Delta_{\conc}(f)(u\otimes v)=&f(uv)&=\scal{S}{uv},\\
\scal{\Delta_{\conc}(S)}{u\otimes v}
=&\displaystyle\sum_{1\le i\le n}\scal{G_i}{u}\scal{D_i}{v}&=\sum_{1\le i\le n}f'_i(u)f''_i(v),\label{Delta_fi}\\
\nu\mu(u)e_i=f'_i(u)=&\scal{G_i}{u}\mbox{ and }\scal{D_i}{v}&=f''_i(v)={}^te_i\mu(v)\eta.
\end{eqnarray}

In order to express solutions of fuchian differential equations with hyperlogarihms \cite{PMB,Linz,ACA,CM}, representative series are viewed as noncommutative generating series of representative functions, on ${\cal X}^*$ with values in a ring $A$ containing $\mathbb Q$. It will be effectively factorized and decomposed within their associated $A$-bialgebras, basing on monoidal factorizations and extending the results (concerning shuffle and quasi-shuffle, and already obtained over $\Q$ or $\C$ in \cite{PMB,Linz,ACA,CM}) to study $\phi$-shuffle $A$-bialgebra of representative series over $\calX$.
For that, in the next sections, we will examine combinatorial aspects of various products (as concatenation, shuffle and its $\phi$-deformations denoted by $\shuffle_{\phi}$) and their coproducts, for which primitive and grouplike series will be characterized, by Proposition \ref{prim_gp}. Moreover, pairs of dual bases, for $\shuffle$ and for ${\shuffle}_{\phi}$ graded bialgebras, will be constructed to factorize diagonal series (see \eqref{diagonalX} and \eqref{diagonalY}) and then the representative series (see Corollary \ref{factorized}). For $A=K$, Sweedler's duals of the $\shuffle$ (resp. ${\shuffle}_{\phi}$) bialgebras of polynomials will be proved to be isomorphic to (non graded) bialgebras of representative series over $\calX$ with coefficients in $A$ (see Proposition \ref{PQ}, Theorem \ref{KS}, Corollary \ref{Sweedler}).

Ending this introduction, let us illustrate our purposes with the following linear differential equation, of order $n\ge0$ with coefficients $\{a_i\}_{0\le i\le n}$ in $\C(z)$,
\begin{eqnarray}
a_n(z)\partial_z^ny(z)+\cdots+a_1(z)\partial_zy(z)+a_0(z)=0,&\mbox{where}&\partial_z={d}/{dz},\label{diffeq}
\end{eqnarray}
putted in the form of linear dynamical system, with the observation $\lambda\in M_{1,n}(\C)$, the initial state $\eta\in M_{1,n}(\C)$, the rational inputs $(u_i)_{0\le i\le m}$ and the matrices $\{M_i\}_{0\le i\le m}$ in $M_{n,n}(\C)$, as follows (see \cite{ACA})
\begin{eqnarray}
\partial_zq=(M_0(q)u_0+\ldots+M_m(q)u_m)q,&q(z_0)=\eta,&y=\lambda q.\label{ds}
\end{eqnarray}

\begin{example}[hypergeometric equation, $m=1$]\label{Hypergeometric}
Let $t_0,t_1,t_2$ be parameters and
\begin{eqnarray*}
z(1-z)\partial_z^2y(z)+[t_2-(t_0+t_1+1)z]\partial_zy(z)-t_0t_1 y(z)=0.
\end{eqnarray*}
For ${}^t(q_1(z),q_2(z))={}^t(-y(z),(1-z)\partial_zy(z))$ and $u_0(z)=z^{-1}$ and $u_1(z)=(1-z)^{-1}$, this hypergeometric equation is represented by $\partial_zq=(M_0u_0+M_1u_1)q$, where
\begin{eqnarray*}
M_0=-\begin{pmatrix}0&0\cr t_0t_1&t_2\end{pmatrix}\mbox{ and }M_1=-\begin{pmatrix}0&1\cr0&t_2-t_0-t_1\end{pmatrix}
\in{\cal M}_{2,2}({\mathbb C}[t_0,t_1,t_2]).
\end{eqnarray*}
\end{example}

It is convenient (and possible) to separate the contribution of $(M_i)_{0\le i\le m}$ and that of the differential forms $(\omega_i)_{0\le i\le m}$, defined by $\omega_i=u_idz$, through the alphabet $X=\{x_i\}_{0\le i\le m}$ generating the monoid $(X^*,1_{X^*})$. Indeed, under convergence conditions \cite{ACA,fliess2}, $y$ (depending on $z_0$) is computed as follows 
\begin{eqnarray}\label{output}
y(z)=\sum_{w\in X^*}\nu\mu(w)\eta\alpha_{z_0}^z(w),&\mbox{where}&
\alpha_{z_0}^z(w)=\left\{\begin{array}{rlc}
1_{\calH(\Omega)}&\mbox{if}&w=1_{X^*},\cr
\displaystyle\int_{z_0}^z\omega_i(s)\alpha_{z_0}^{s}(v)&\mbox{if}&w=x_iv,
\end{array}\right.
\end{eqnarray} 
as a pairing of the generating series of (\ref{ds}) \cite{fliess2} and the Chen series \cite{chen}:
\begin{eqnarray}\label{ngsofds}
F=\sum_{w\in X^*}\nu\mu(w)\eta w
&\mbox{and}&C_{z_0\path z}=\sum_{w\in X^*}\alpha_{z_0}^z(w)w.
\end{eqnarray}

Hence, the system in \eqref{ds} is associated to the triplet $(\nu,\mu,\eta)$
and provides two  functions over the free monoid $X^*$, $\mu$ and $\alpha_{z_0}^z$. Moreover, the iterated integrals $\{\alpha_{z_0}^z(w)\}_{w\in X^*}$ (of $(\omega_i)_{0\le i\le m}$ and along the path $z_0\path z$ over a simply connected manifold $\Omega$) belong to the ring of holomorphic functions, $\calH(\Omega)$.

\begin{example}
By Examples \ref{pos}-\ref{Hypergeometric}, for $\omega_0(z)=z^{-1}dz$ and $\omega_1(z)=(1-z)^{-1}dz$ along $z_0\path z$ over $\widetilde{\C\setminus\{0,1\}}$, one has $\alpha_0^z(x_0^{s_1-1}x_1\cdots x_0^{s_r-1}x_1)=\Li_{s_1,\cdots,s_r}(z)\in\calH(\widetilde{\C\setminus\{0,1\}})$.
\end{example}

Generally, for any set of singularities $\sigma=\{s_i\}_{i\ge0}$ ($s_0=0$), let $\rho_i=s_i^{-1}$ and $\omega_i(z)=u_i(z)dz$, where $u_i(z)=(z-s_i)^{-1}=\rho_i(1-\rho_iz)^{-1}$. Suppose that
\begin{eqnarray}\label{assumption}
\mbox{if }i\neq j\mbox{ then }s_i\neq s_j(i,j\ge0)\mbox{ and }s_i=e^{\mathrm{i}\theta_i},\mbox{ with }\theta_i\in]-\pi,\pi[.
\end{eqnarray}
For $X=\{x_i\}_{i\ge0}$ and $Y=\{y_{s_k,\rho}\}_{k\ge1,\rho\in\sigma}$, let $\pi_Y:X^*(X\setminus\{x_0\})\longrightarrow Y^*$ be a $\conc$-morphims mapping $x_0^{s-1}x_i$ to $y_{s,\rho_i}$. For $w=x_0^{s_1-1}x_{i_1}\ldots x_0^{s_r-1}x_{i_r}$, $\alpha_0^z(w)\in\calH(\widetilde{\C\setminus\sigma})$ is a hyperlogarithm \cite{lappo,Linz,Todorov} (or Dirichlet function \cite{FPSAC95,FPSAC96}):
\begin{eqnarray}
&\alpha_0^z(w)
&=\Li_w(z)
=\sum_{n_1>\cdots>n_r>0}\frac{\rho_{i_1}^{n_1}\cdots\rho_{i_r}^{n_r}}{n_1^{s_1}\cdots n_r^{s_r}}z^{n_1}\label{polylogarithms}
\end{eqnarray}
and the following ratio yields an extended harmonic sum, as arithmetic function,
\begin{eqnarray}
\frac{\Li_w(z)}{1-z}=\sum_{n\ge0}\H_{\pi_Y w}(n)z^n,&\mbox{where}&
\H_{\pi_Y w}(n)
=\sum_{n\ge n_1>\cdots>n_r>0}\frac{\rho_{i_1}^{n_1}\cdots\rho_{i_r}^{n_r}}{n_1^{s_1}\cdots n_r^{s_r}}.\label{harmonic_sums}
\end{eqnarray}
Hence, $\Li_{\bullet}$ (resp. $\H_{\bullet}$) is a function on the free monoid $X^*$ (resp. $Y^*$) to the ring of hyperlogarithms (resp. extended harmonic sums) in which by convention, $\Li_{x_0}(z)$ stand for $\log(z)$ \cite{Linz}.
Moreover, by the assumption in \eqref{assumption}, the logarithms $\{\Li_x\}_{x\in X}$ are linearly free over $\Q[z,z^{-1},\{(1-\rho_iz)^{-1}\}_{1\le i\le m}]$ and it follows that, by Lemma 2.2 in \cite{PMB}, $\{\Li_w\}_{w\in X^*}$ are linearly free over $\Q[z,z^{-1},\{(1-\rho_iz)^{-1}\}_{1\le i\le m}]$ and then $\{\H_w\}_{w\in Y^*}$ are $\Q$-linearly free, as the Taylor coefficients of $\{(1-z)^{-1}\Li_{\pi_X w}(z)\}_{w\in Y^*}$.

For $(s_{i_1},\rho_{i_1})\neq(1,1)$, the following limits exist and coincide with
\begin{eqnarray}\label{extendedpolyzeta}
\zeta\Big({\rho_{i_1}\atop s_{i_1}}\cdots{\rho_{i_r}\atop s_{i_r}}\Big):=
\lim_{z\to0}\Li_w(z)=\lim_{n\to+\infty}\H_{\pi_Y w}(n)
=\sum_{n_1>\cdots>n_r>0}\frac{\rho_{i_1}^{n_1}\cdots\rho_{i_r}^{n_r}}{n_1^{s_1}\cdots n_r^{s_r}},
\end{eqnarray}
which is an extended polyzeta, \textit{i.e.} the $\zeta$ in \eqref{extendedpolyzeta} is a partial function on free monoid $\big({\sigma\atop{\mathbb N}_{\ge1}}\big)^*$ to $\mathbb R$. It can be, similarly to the ordinary $\zeta$ polymorphism \cite{CM}, realized as a polymorphism from the subalgebra of $(\ncp{\Q}{X},\shuffle,1_{X^*})$ (resp. $(\ncp{\Q}{Y},\shuffle_{\phi},1_{Y^*})$) to $\R$, where $\shuffle_{\phi}$ is a deformation of $\shuffle$, over $\ncp{\mathbb C}{Y}$ (see Section \ref{coproducts} below).

\begin{example}[coulored polylogarithms and coulored harmonic sums, \cite{legsloops}]
Let $\calO_m=\{\rho_i\}_{1\le i\le m}$, where $\rho_i=e^{\mathrm{i}\frac{2\pi}{m}i}$. Let $\omega_0(z)=z^{-1}{dz}$ and $\omega_i(z)=\rho_i(1-\rho_iz)^{-1}{dz}$, for $1\le i\le m$. Let $X=\{x_0,\cdots,x_m\}$ and $Y=\{y_{s_i,\rho}\}_{i\ge1,\rho\in{\cal O}_m}$. For any coulored multiindex $\big({\rho_{i_1}\atop s_1}\cdots{\rho_{i_r}\atop s_r}\big)\in\big({{\cal O}_m\atop{\mathbb N}_{\ge1}}\big)^*$ associated to $x_0^{s_1-1}x_{i_1}\cdots x_0^{s_r-1}x_{i_r}\in X^*(X\setminus\{x_0\})$ and to $y_{s_{i_1},\rho_{i_1}}\allowbreak\cdots y_{s_r,\rho_{i_r}}\in Y^*$, the iterated integral $\alpha_0^z(x_0^{s_1-1}x_{i_1}\cdots x_0^{s_r-1}x_{i_r})$ is the following coulored polylogarithm and the following ratio yields the so-called coulored harmonic sum
\begin{eqnarray*}
\Li_{x_0^{s_1-1}x_{i_1}\cdots x_0^{s_r-1}x_{i_r}}(z)
=\Li_{{\rho_{i_1}\atop s_1}\cdots{\rho_{i_r}\atop s_r}}(z)
&=&\sum_{n_1>\cdots>n_r>0}\frac{\rho_{i_1}^{n_1}\cdots\rho_{i_r}^{n_r}}{n_1^{s_1}\cdots n_r^{s_r}}{z^{n_1}},\\
(1-z)^{-1}\Li_{x_0^{s_1-1}x_{i_1}\cdots x_0^{s_r-1}x_{i_r}}(z)
&=&\sum_{n\ge0}\H_{y_{s_{i_1},\rho_{i_1}}\cdots y_{s_r,\rho_{i_r}}}(n)z^n,\cr
\H_{y_{s_{i_1},\rho_{i_1}}\cdots y_{s_r,\rho_{i_r}}}(n)
=\H_{{\rho_{i_1}\atop s_1}\cdots{\rho_{i_r}\atop s_r}}(n)
&=&\sum_{n\ge n_1>\cdots>n_r>0}\frac{\rho_{i_1}^{n_1}\cdots\rho_{i_r}^{n_r}}{n_1^{s_1}\cdots n_r^{s_r}}.
\end{eqnarray*}
Hence, $\Li_{\bullet}$ (resp. $\H_{\bullet}$) is a function as well as on the monoid $\big({{\cal O}_m\atop{\mathbb N}_{\ge1}}\big)^*$ than on $X^*$ (resp. $Y^*$) to the ring of coulored polylogarithms (resp. harmonic sums), \textit{i.e.}\footnote{Recall also that, by convention, $\Li_{x_0}(z)$ stand for $\log(z)$)} $\{\log(z)\}\cup\{\Li_{{\rho_{i_1}\atop s_1},\cdots,{\rho_{i_r}\atop s_r}}(z)\}_{{\rho_{i_1},\cdots,\rho_{i_r}\in\calO_m\atop s_1,\cdots,s_r\ge1,r\ge0}}$ (resp. $\{\H_{{\rho_{i_1}\atop s_1},\cdots,{\rho_{i_r}\atop s_r}(n)}\}_{{\rho_{i_1},\cdots,\rho_{i_r}\in\calO_m\atop s_1,\cdots,s_r\ge1,r\ge0}}$). For $(s_{i_1},\rho_{i_1})\neq(1,1)$, the following limits exist and coincide with the so-called coulored polyzeta \cite{legsloops}:
\begin{eqnarray*}
\left\{\begin{array}{rcc}
\zeta(x_0^{s_1-1}x_{i_1}\cdots x_0^{s_r-1}x_{i_r})&:=&
\lim\limits_{z\to1}\Li_{{\rho_{i_1}\atop s_1}\cdots{\rho_{i_r}\atop s_r}}(z)\\
\zeta(y_{s_{i_1},\rho_{i_1}}\cdots y_{s_r,\rho_{i_r}})&:=&
\lim\limits_{n\to+\infty}\H_{{\rho_{i_1}\atop s_1}\cdots{\rho_{i_r}\atop s_r}}(n)
\end{array}\right\}
=\zeta\Big({\rho_{i_1}\atop s_1}\cdots{\rho_{i_r}\atop s_r}\Big).
\end{eqnarray*}
This common limit, as  special value of coulored polylogarithm, is an iterated integral and satisfies shuffle relations. As limit of coulored harmonic sum for $n\to+\infty$, it satisfies also the coulored quasi-shuffle relations, induced by the coulored quasi-shuffle product, as a deformation of $\shuffle$, over $\ncp{\mathbb C}{Y}$, which is defined by $u\stuffle1_{Y^*}=1_{Y^*}\stuffle u=u$ and $(y_{s,\rho}u)\stuffle(y_{s',\rho'}u')=y_{s,\rho}(u\stuffle(y_{s',\rho'}u'))+y_{s',\rho'}((y_{s,\rho}u)\stuffle u')+y_{s+s',\rho\rho'}(u\stuffle u')$, for $y_{s,\rho},y_{s',\rho'}\in Y$ and $u,u'\in Y^*$ (see also Section \ref{coproducts} below).
\end{example}

\section{Various products of formal power series}\label{coproducts}
In all the sequel, unless explicitly stated, all tensor products will be considered over the ring $A$ containing $\Q$. Let $X=\{x_0,\cdots,x_m\},x_0\prec\cdots\prec x_m$ (resp. $Y=\{y_k\}_{k\ge1},y_1\succ y_2\succ\ldots$) generate the monoid $X^*$ (resp. $Y^*$) with respect to the concatenation, denoted by $\conc$ and omitted when there is no ambiguity. For all matters concerning $X$ or $Y$, a generic model noted $\calX$ is used to state their common combinatorial features.

For ${\calX}=X$ or $Y$ the corresponding monoids are equipped with length functions\footnote{For $X$ we consider the length of words (\textit{i.e.} $(w)=\ell(w)=\abs{w}$) and for $Y$ the length is given by the weight (\textit{i.e.} $(w)=\ell(y_{i_1}\ldots y_{i_n})=i_1+\ldots+i_n$).}, inducing a grading of $\ncp{A}{{\calX}}$ and $\ncp{\calL ie_{A}}{{\calX}}$ in free modules of finite dimensions.
The module $\ncp{A}{\calX}$ is endowed with the unital associative concatenation product and the unital associative commutative shuffle product, defined by $u\shuffle 1_{\calX^*}=1_{\calX^*}\shuffle u=u$ and $
xu\shuffle yv=x(u\shuffle yv)+y(xu\shuffle v)$, for $x,y\in\calX$ and $u,v\in\calX^*$ \cite{berstel}. As morphisms for $\conc$, the coproducts $\Delta_{\conc}$ and $\Delta_{\shuffle}$ are defined on letters $x$, by $\Delta_{\conc}(x)=\Delta_{\shuffle}(x)=1_{\calX^*}\otimes x+x\otimes1_{\calX^*}$.

By a Radford's theorem, the set of Lyndon words, denoted by $\Lyn\calX$, forms a pure transcendence basis of the algebra $(\ncp{A}{\calX},\shuffle,1_{\calX^*})$ \cite{reutenauer}. It is known that the enveloping algebra $\mathcal{U}(\ncp{\calL ie_{A}}{\calX})$ is isomorphic to the connected, graded and cocommutative bialgebra $\calH_{\shuffle}(\calX)=(\ncp{A}{\calX},\conc,1_{\calX^*},\Delta_{\shuffle})$, being equipped the linear basis $\{P_w\}_{w\in \calX^*}$ (expanded after the homogeneous basis $\{P_l\}_{l\in \Lyn\calX}$ of $\ncp{\calL ie_{A}}{\calX}$) and its graded dual basis $\{S_w\}_{w\in\calX^*}$ (containing the transcendence basis $\{S_l\}_{l\in\Lyn\calX}$ of the $\shuffle$-algebra) \cite{reutenauer}. Let $\calH_{\shuffle}^{\vee}(\calX)=(\ncp{A}{\calX},{\shuffle},1_{\calX^*},\Delta_{\conc})$ be the graded dual of $\calH_{\shuffle}(\calX)$. Then the diagonal series ${\calD}_{\calX}$ is factorized by \cite{reutenauer}
\begin{eqnarray}\label{diagonalX}
{\calD}_{\calX}:=\sum_{w\in\calX^*}w\otimes w=\sum_{w\in\calX^*}S_w\otimes P_w
=\prod_{l\in\Lyn\calX}^{\searrow}e^{S_l\otimes P_l}.
\end{eqnarray}

Additionally and similarly to the quasi-shuffle case \cite{VJM,CM}, $\ncp{A}{Y}$ is also equipped with the unital associative commutative  product, $\shuffle_{\phi}$, defined for any $u,v\in Y^*$ and $y_i,y_j\in Y$, by $u{\shuffle_{\phi}}1_{Y^*}=1_{Y^*}{\shuffle_{\phi}}u=u$ and
\begin{eqnarray}
y_iu{\shuffle_{\phi}}y_jv=y_i(u{\shuffle_{\phi}}y_jv)+y_j(y_iu{\shuffle_{\phi}}v)+\phi(y_i,y_j)(u{\shuffle_{\phi}}v),
&&\phi(y_i,y_j)=\sum_{i+j=k}\gamma_{i,j}^ky_k,
\end{eqnarray}
and its dual law, being a $\conc$-morphism, is given by
\begin{eqnarray}\label{Dstuffle}
\forall y_k\in Y,\Delta_{\shuffle_{\phi}}(y_k)=y_k\otimes 1_{Y^*}+1_{Y^*}\otimes y_k +\sum_{i+j=k}\gamma_{i,j}^ky_i\otimes y_j.
\end{eqnarray}
For $\mathrm{Prim}_{\shuffle_{\phi}}(Y)=\span_{A}\{\pi_1w\}_{w\in Y^*}$, where $\pi_1$ is the eulerian idempotent defined by
\begin{eqnarray}\label{pi_1}
\forall w\in Y^*,&&
\pi_1w=w+\sum_{k=2}^{(w)}\frac{(-1)^{k-1}}k\sum_{u_1,\ldots,u_k\in Y^+}\scal{w}{u_1{\shuffle_{\phi}}\ldots{\shuffle_{\phi}}u_k}u_1\ldots u_k,
\end{eqnarray}
the enveloping algebra $\mathcal{U}(\mathrm{Prim}_{\shuffle_{\phi}}(Y))$ is isomorphic to the connected, graded and cocommutative bialgebra $\calH_{\shuffle_{\phi}}(Y)=(\ncp{A}{Y},\conc,1_{Y^*},\Delta_{\shuffle_{\phi}})$ admitting $\calH_{\shuffle_{\phi}}^{\vee}(Y)=(\ncp{A}{Y},{\shuffle_{\phi}},1_{Y^*},\Delta_{\conc})$ as dual, in which the diagonal series ${\calD}_Y$ is factorized by
\begin{eqnarray}\label{diagonalY}
{\calD}_Y:=\sum_{w\in Y^*}w\otimes w=
\sum_{w\in Y^*}\Sigma_w\otimes\Pi_w
=\prod_{l\in\Lyn Y}^{\searrow}e^{\Sigma_l\otimes\Pi_l},
\end{eqnarray}
where $\{\Pi_w\}_{w\in Y^*}$ is the linear basis (expanded by PBW after the homogeneous in weight basis $\{\Pi_l\}_{l\in \Lyn Y}$ of $\mathrm{Prim}_{\shuffle_{\phi}}(Y)$) and $\{\Sigma_w\}_{w\in Y^*}$ is its dual basis (containing the pure transcendence basis $\{\Sigma_l\}_{l\in\Lyn Y}$ of the $\shuffle_{\phi}$-algebra). Moreover, the automorphism $\varphi_{\pi_1}$ of $(\ncp{A}{Y},\conc,\allowbreak1_{Y^*})$, mapping $y_k$ to $\pi_1y_k$ (see \eqref{pi_1}), is an isomorphism of bialgebras between $\calH_{\shuffle}(Y)$ and $\calH_{\shuffle_{\phi}}(Y)$. It follows then the linear basis $\{\Pi_w\}_{w\in Y^*}$ (resp. $\{\Sigma_w\}_{w\in Y^*}$) is image of $\{P_w\}_{w\in Y^*}$ (resp. $\{S_w\}_{w\in Y^*}$) by $\varphi_{\pi_1}$ (resp. $\check\varphi_{\pi_1}^{-1}$, where $\check\varphi_{\pi_1}$ is the adjoint of $\varphi_{\pi_1}$.
 
The above products and coproducts are extended over series by
\begin{eqnarray}
\begin{array}{rclrcl}
\shuffle:\ncs{A}{\calX}\otimes\ncs{A}{\calX}&\longrightarrow&\ncs{A}{\calX},
&\Delta_{\shuffle}:\ncs{A}{\calX}&\longrightarrow&\ncs{A}{\calX^*\otimes\calX^*},\\
\conc:\ncs{A}{\calX}\otimes\ncs{A}{\calX}&\longrightarrow&\ncs{A}{\calX},
&\Delta_{\conc}:\ncs{A}{\calX}&\longrightarrow&\ncs{A}{\calX^*},\\
{\shuffle}_{\phi}:\ncs{A}{Y}\otimes\ncs{A}{Y}&\longrightarrow&\ncs{A}{Y},
&\Delta_{{\shuffle}_{\phi}}:\ncs{A}{Y}&\longrightarrow&\ncs{A}{Y\otimes Y},
\end{array}
\end{eqnarray}
and, for any $S$ and $R\in\ncs{A}{\calX}$, by
\begin{eqnarray}
S{\shuffle}_{\phi} R=\sum_{u,v\in Y^*}\scal{S}{u}\scal{R}{v}u{\shuffle}_{\phi} v
&\mbox{and}&
\Delta_{{\shuffle}_{\phi}}S=\sum_{w\in Y^*}\scal{S}{w}\Delta_{{\shuffle}_{\phi}}w
\label{extenoverseries3},\cr
S\shuffle R=\sum_{u,v\in\calX^*}\scal{S}{u}\scal{R}{v}u\shuffle v
&\mbox{and}&
\Delta_{\shuffle}S=\sum_{w\in\calX^*}\scal{S}{w}\Delta_{\shuffle}w,\label{extenoverseries2}\\
SR=\sum_{u,v\in\calX^*\atop uv=w\in\calX^*}\scal{S}{u}\scal{R}{v}w
&\mbox{and}&
\Delta_{\conc}S=\sum_{w\in\calX^*}\scal{S}{w}\Delta_{\conc}w.\nonumber\label{extenoverseries1}
\end{eqnarray}
Note also that $\Delta_{\stuffle}S\in\ncs{A}{Y^*\otimes Y^*},\Delta_{\conc}S$ and $\Delta_{\shuffle}S\in\ncs{A}{\calX^*\otimes\calX^*}$.

Now, we are in situation to define

\begin{definition}\label{dec0}
For ${\shuffle}_{\phi}$ (resp. $\shuffle$ and $\conc$), any $S\in\ncs{A}{Y}$ (resp. $\ncs{A}{\calX}$) is
\begin{enumerate}
\item a character of $\ncp{A}{Y}$ (resp. $\ncp{A}{\calX}$) if and only if $\scal{S}{1_{Y^*}}=1_A$ (resp. $\scal{S}{1_{\calX^*}}=1_A$) and, for any $u$ and $v\in Y^*$ (resp. $\calX^*$),
\begin{eqnarray*}
\scal{S}{u{\shuffle}_{\phi} v}=\scal{S}{u}\scal{S}{v}
&\mbox{(resp.}&\scal{S}{u\shuffle v}=\scal{S}{u}\scal{S}{v}\\
&\mbox{and}&\scal{S}{uv}=\scal{S}{u}\scal{S}{v}).
\end{eqnarray*}

\item an infinitesimal character of $\ncp{A}{Y}$ (resp. $\ncp{A}{\calX}$) if and only if, for any $u$ and $v\in Y^*$ (resp. $\calX^*$),
\begin{eqnarray*}
\scal{S}{u{\shuffle}_{\phi} v}&=&\scal{S}{u}\scal{v}{1_{Y^*}}+\scal{u}{1_{Y^*}}\scal{S}{v},\cr
\mbox{(resp. }\scal{S}{u\shuffle v}&=&\scal{S}{u}\scal{v}{1_{Y^*}}+\scal{u}{1_{Y^*}}\scal{S}{v},\cr
\mbox{and }\scal{S}{uv}&=&\scal{S}{u}\scal{v}{1_{Y^*}}+\scal{u}{1_{Y^*}}\scal{S}{v}).
\end{eqnarray*}
\end{enumerate}
\end{definition}

\begin{definition}\label{gpl_p}
For $\Delta_{{\shuffle}_{\phi}}$ (resp. $\Delta_{\shuffle}$ and $\Delta_{\conc}$), a series $S$ is said to be 
\begin{enumerate}
\item grouplike if and only if $\scal{S}{1_{Y^*}}=1_{A}$ (resp. $\scal{S}{1_{\calX^*}}=1_{A}$) and
$\Delta_{{\shuffle}_{\phi}}S=S\otimes S$ (resp. $\Delta_{\shuffle}S=S\otimes S$ and $\Delta_{\conc}S=S\otimes S$).

\item primitive if and only if
$\Delta_{{\shuffle}_{\phi}}S=1_{Y^*}\otimes S+S\otimes1_{Y^*}$ (resp. $\Delta_{\shuffle}S=1_{\calX^*}\otimes S+S\otimes1_{\calX^*}$ and $\Delta_{\conc}S=1_{\calX^*}\otimes S+S\otimes1_{\calX^*})$.
\end{enumerate}
\end{definition}

\begin{proposition}\label{prim_gp}
\begin{enumerate}
\item Any series $S$ is grouplike for $\Delta_{{\shuffle}_{\phi}}$ (resp. $\Delta_{\shuffle}$ and $\Delta_{\conc}$) if and only if it is a character of $\ncp{A}{Y}$ (resp. $\ncp{A}{\calX}$) for ${\shuffle}_{\phi}$ (resp. $\shuffle$ and $\conc$).

\item Any series $S$ is primitive for $\Delta_{{\shuffle}_{\phi}}$ (resp. $\Delta_{\shuffle}$ and $\Delta_{\conc}$) if and only if it is an infinitesimal character of $\ncp{A}{Y}$ (resp. $\ncp{A}{\calX}$) for $\shuffle_{\phi}$ (resp. $\shuffle$ and $\conc$). 

\item\label{ree} If $\scal{S}{1_{\calX^*}}=1$ then $S$ is grouplike for $\shuffle_{\phi}$ (resp. $\shuffle$ and $\conc$) if and only if $\log S$ is primitive for $\Delta_{{\shuffle}_{\phi}}$ (resp. $\Delta_{\shuffle}$).

\item The set of grouplike series, denoted by $\calG_{{\shuffle}_{\phi}}^Y$ (resp. $\calG_{\shuffle}^{\calX}$ and $\calG_{\conc}^{\calX}$), is a group.

\item The set of primitive series, denoted by $\calP_{{\shuffle}_{\phi}}^Y$ (resp. $\calP_{\shuffle}^{\calX}$ and $\calP_{\conc}^{\calX}$) is a Lie algebra.
\end{enumerate}
\end{proposition}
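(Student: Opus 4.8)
\noindent\emph{Proof.} The plan is to get items~1 and~2 directly from the duality between each product and its coproduct, to derive item~3 from the exponential/logarithm calculus in the completed concatenation algebra, and to read off items~4 and~5 from the general fact that the grouplike (resp.\ primitive) elements of a bialgebra form a monoid (resp.\ Lie algebra) under the product for which the coproduct is a morphism. For items~1 and~2 the first step is to record that, for $\bullet\in\{\shuffle_{\phi},\shuffle,\conc\}$, the extension of $\Delta_{\bullet}$ to series pairs as $\scal{\Delta_{\bullet}S}{u\otimes v}=\scal{S}{u\bullet v}$ for all words $u,v$ (for $\conc$ this is $\scal{\Delta_{\conc}S}{u\otimes v}=\scal{S}{uv}$, cf.\ \eqref{Delta_f}). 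Then $\Delta_{\bullet}S=S\otimes S$ reads coefficientwise as $\scal{S}{u\bullet v}=\scal{S}{u}\scal{S}{v}$ for all $u,v$, which together with $\scal{S}{1}=1_{A}$ is exactly the character condition of Definition~\ref{dec0}(1), and $\Delta_{\bullet}S=1\otimes S+S\otimes1$ reads as $\scal{S}{u\bullet v}=\scal{S}{u}\scal{v}{1}+\scal{u}{1}\scal{S}{v}$, the infinitesimal-character condition of Definition~\ref{dec0}(2); this disposes of the three cases of items~1 and~2 simultaneously.

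For item~3, since $\scal{S}{1_{\calX^*}}=1$ I would write $S=1_{\calX^*}-N$ with $N$ of zero constant term, so that by the grading both $\log S=-\sum_{k\ge1}N^{k}/k$ and its formal inverse $\exp$ converge in $\ncs{A}{\calX}$ for $\conc$. The key point is that $\Delta_{\shuffle_{\phi}}$ and $\Delta_{\shuffle}$ are continuous $\conc$-morphisms into the completed concatenation algebra of the tensor square, hence commute with $\exp$ and $\log$, whereas $S\otimes1$ and $1\otimes S$ commute there. Writing $\Delta$ for $\Delta_{\shuffle_{\phi}}$ or $\Delta_{\shuffle}$: if $S$ is grouplike then $\Delta(\log S)=\log(\Delta S)=\log\big((S\otimes1)(1\otimes S)\big)=(\log S)\otimes1+1\otimes(\log S)$, so $\log S$ is primitive; conversely, if $L:=\log S$ is primitive then $\Delta S=\exp(\Delta L)=\exp\big(L\otimes1+1\otimes L\big)=(\exp L\otimes1)(1\otimes\exp L)=S\otimes S$, so $S$ is grouplike. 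It matters that $\Delta_{\conc}$ is excluded here: being a $\shuffle$-morphism, not a $\conc$-morphism, it does not commute with the concatenation exponential.

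For items~4 and~5 I would use that each coproduct is an algebra morphism for a product $\star$ on $\ncs{A}{\calX}$, namely $\star=\conc$ for $\Delta_{\shuffle_{\phi}}$ and $\Delta_{\shuffle}$ (the bialgebras $\calH_{\shuffle_{\phi}}(Y)$, $\calH_{\shuffle}(\calX)$) and $\star=\shuffle$ for $\Delta_{\conc}$ (the dual bialgebra $\calH_{\shuffle}^{\vee}(\calX)$), with $\star$ extended componentwise to the tensor square. Item~4: if $S,T$ are grouplike then $\Delta(S\star T)=\Delta S\star\Delta T=(S\star T)\otimes(S\star T)$ with $\scal{S\star T}{1}=1$, so the set is a submonoid with unit $1_{\calX^*}$; every grouplike $S$ has constant term $1$, hence a $\star$-inverse $S^{-1}=\sum_{k\ge0}(1_{\calX^*}-S)^{\star k}$, and $\Delta(S^{-1})\star\Delta S=\Delta(1_{\calX^*})=1\otimes1$ forces $\Delta(S^{-1})=S^{-1}\otimes S^{-1}$; hence $\calG_{\shuffle_{\phi}}^{Y}$, $\calG_{\shuffle}^{\calX}$ (under $\conc$) and $\calG_{\conc}^{\calX}$ (under $\shuffle$) are groups. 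Item~5: the primitives form an $A$-submodule, and expanding $\Delta(S\star T)=(1\otimes S+S\otimes1)\star(1\otimes T+T\otimes1)$ with $(S\otimes1)\star(1\otimes T)=S\otimes T=(1\otimes T)\star(S\otimes1)$ and subtracting the version with $S,T$ interchanged gives $\Delta(S\star T-T\star S)=1\otimes(S\star T-T\star S)+(S\star T-T\star S)\otimes1$; since antisymmetry and the Jacobi identity are automatic for a commutator, $\calP_{\shuffle_{\phi}}^{Y}$, $\calP_{\shuffle}^{\calX}$ and $\calP_{\conc}^{\calX}$ (the last abelian, $\shuffle$ being commutative) are Lie algebras.

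The one step demanding genuine care is item~3: being precise about the convergence of $\exp$ and $\log$ on series of constant term $1$, about the continuity of $\Delta_{\shuffle_{\phi}}$ and $\Delta_{\shuffle}$ and their commutation with those two series, and about the commutation of $S\otimes1$ with $1\otimes S$ in the completed concatenation algebra of $\calX^{*}\otimes\calX^{*}$. Items~1, 2, 4 and~5 are then just a mechanical unwinding of the coefficient pairing together with standard bialgebra bookkeeping. \cqfd
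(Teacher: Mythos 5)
Your proof is correct. The paper's own ``proof'' is a one-line appeal to classical Hopf-algebra theory, and what you have written is precisely the standard argument those references contain: the coefficientwise duality $\scal{\Delta_{\bullet}S}{u\otimes v}=\scal{S}{u\bullet v}$ for items 1--2, the Ree--Friedrichs exp/log computation (using that $\Delta_{\shuffle}$ and $\Delta_{\shuffle_{\phi}}$ are continuous $\conc$-morphisms) for item 3, and the bialgebra-morphism bookkeeping for items 4--5 --- including the one point where care is genuinely needed, namely pairing each coproduct with the correct product ($\conc$ for $\Delta_{\shuffle}$, $\Delta_{\shuffle_{\phi}}$ and $\shuffle$ for $\Delta_{\conc}$).
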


\begin{proof}
These facts are classical in theory of Hopf algebras \cite{abe,CartierPatras,VJM,reutenauer}.
\end{proof}

\section{Various characterizations of representative series}
Representative series are representative functions on the free monoid. Indeed,

\begin{definition}\label{dec1}
Let $S\in\ncs{A}{\calX}$ (resp. $\ncp{A}{\calX}$) and $P\in\ncp{A}{\calX}$ (resp. $\ncs{A}{\calX}$).
Then the {\it left} and the {\it right} \textit{shifts}\footnote{These are called {\it residuals} and extend shifts of functions in harmonic analysis \cite{jacob}.
In terms of representative functions, these are the {\it left} and {\it right} \textit{translates} \cite{abe,CartierPatras}.} of $S$ by $P$, $P\rd S$ and {$S\rg P$}, are defined, for any $w\in\calX^*$, by
$\scal{{P\rd S}}{w}=\scal{S}{wP}$ and $\scal{{S\rg P}}{w}=\scal{S}{Pw}$.
\end{definition}

\begin{remark}
The shifts operators are associative and mutually commute, \textit{i.e.}
$S\rg(P\rg R)=(S\rg P)\rg R,
P\rd(R\rd S)=(P\rd R)\rd S,
(P\rg S)\rd R=P\rg(S\rd R)$
and then one has $x\rd(wy)=(yw)\rg x=\delta_{x,y}w$, for
$x,y\in\calX$ and $w\in\calX^*$.
\end{remark}

\begin{definition}[\cite{berstel}]\label{rational}
The series $S$ is rational if it belongs to the smallest algebraic closure by rational operations (conatenation, addition,  Kleene star) containing $\ncp{A}{\calX}$.
The $A$-module of rational series is denoted by ${\ncs{A^{\mathrm{rat}}}\calX}$.
\end{definition}

\begin{definition}\label{dec1bis}
Let $S\in\ncs{K}{\calX}$ (resp. $\ncs{K}{Y}$).
The Sweedler's dual $\calH_{\shuffle}^{\circ}(\calX)$ (resp. $\calH_{{\shuffle}_{\phi}}^{\circ}(Y)$) of $\calH_{\shuffle}(\calX)$ (resp. $\calH_{{\shuffle}_{\phi}}(Y)$) is defined, with a family $\{G_i,D_i\}_{i\in I}$ of series in $\calH_{\shuffle}^{\circ}(\calX)$ (resp. $\calH_{{\shuffle}_{\phi}}^{\circ}(Y)$) and finite $I$, by
\begin{eqnarray*}
S\in\calH_{\shuffle}^{\circ}(\calX)\mbox{ (resp. $\calH_{{\shuffle}_{\phi}}^{\circ}(Y)$)}
&\iff&\Delta_{\conc}(S)=\sum_{i\in I}G_i\otimes D_i.
\end{eqnarray*}
\end{definition}

\begin{remark}\label{dec2}
Let $S\in\ncs{A}{\calX}$ and suppose that there is some finite set $I$  and a double family $\{G_i,D_i\}_{i\in I}$ of series in $\ncs{A}{\calX}$ such that, using $\Delta_{\conc}$,
\begin{eqnarray*}
\Delta_{\conc}(S)=\sum_{i\in I}G_i\otimes D_i.
\end{eqnarray*}
Then, for any $v\in\calX^*$ and $i\in I$, putting $G'_i=G_i\rg v$ and $D'_i=v\rd D_i$, one has
\begin{enumerate}
\item $\Delta_{\conc}(S\rg v)=\sum\limits_{i\in I}G'_i\otimes D_i$ and
$\Delta_{\conc}(v\rd S)=\sum\limits_{i\in I}G_i\otimes D'_i$.

\item $\{S\rg v\}_{v\in\calX^*}$ (resp. $\{v\rd S\}_{v\in \calX^*}$) lie in a finitely generated shift-invariant $A$-module if and only if $\{G_i\rg v\}_{v\in \calX^*}$ (resp. $\{v\rd D_i\}_{v\in \calX^*}$) does (for $i\in I$).

\item If $S\in\calH_{\shuffle}^{\circ}(\calX)$ then $v\rd S$ and $S\rg v\in\calH_{\shuffle}^{\circ}(\calX)$ ($v\in\calX^*$).

\item $\calH_{\shuffle}(\calX)$ and $\calH_{{\shuffle}_{\phi}}(Y)$ are graded while $\calH_{\shuffle}^{\circ}(\calX)$ and $\calH_{{\shuffle}_{\phi}}^{\circ}(Y)$ are not.
\end{enumerate}
\end{remark}

\begin{theorem}\label{KS}
A series $S$ is rational if and only if one of the following assertions holds
\begin{enumerate}
\item The shifts $\{S\trl w\}_{w\in\calX^*}$ (resp. $\{w\rd S\}_{w\in \calX^*}$) lie in a finitely generated shift-invariant $A$-module \cite{jacob-these}.

\item There is $n\in{\mathbb N}$ and a linear representation $(\nu,\mu,\eta)$ of rank $n$ of $S$ such that $\scal{S}{w}=\nu\mu(w)\eta$ (for $w\in\calX^*$), where $\nu\in{\cal M}_{n,1}(A),\eta\in{\cal M}_{1,n}(A)$ and $\mu:\calX^*\longrightarrow{\cal M}_{n,n}(A)$ (Kleene-Sch\"utzenberger theorem) \cite{berstel}.
\end{enumerate}
\end{theorem}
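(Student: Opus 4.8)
The plan is to prove the cycle of implications (2) $\Rightarrow$ (1) $\Rightarrow$ rational $\Rightarrow$ (2), since this is cleaner than proving three separate equivalences. First I would show that (2) implies (1): given a linear representation $(\nu,\mu,\eta)$ of rank $n$, compute directly that $\scal{S\trl w}{u}=\scal{S}{wu}=\nu\mu(w)\mu(u)\eta$, so that $S\trl w$ has linear representation $(\nu\mu(w),\mu,\eta)$. Hence every right shift lies in the $A$-module spanned by the $n$ series with representations $({}^te_i,\mu,\eta)$, $1\le i\le n$ (the series $D_i$ of the introduction), which is manifestly shift-invariant and finitely generated; the argument for left shifts is symmetric, using $\scal{w\rd S}{u}=\scal{S}{uw}=\nu\mu(u)\mu(w)\eta$ and the generators $(\nu,\mu,e_i)$.

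Next, (1) $\Rightarrow$ rational. Suppose $M$ is a finitely generated shift-invariant $A$-module containing all $\{S\trl w\}_{w\in\calX^*}$; pick generators $T_1,\dots,T_n$ with $S\in M$, say $S=\sum_j \lambda_j T_j$. Shift-invariance gives, for each letter $x\in\calX$, scalars $a_{jk}^{(x)}\in A$ with $T_j\trl x=\sum_k a_{jk}^{(x)}T_k$. Define $\mu:\calX^*\to M_{n,n}(A)$ by $\mu(x)_{jk}=a_{jk}^{(x)}$ extended multiplicatively, $\eta$ the column of constant terms $\scal{T_j}{1_{\calX^*}}$, and $\nu$ the row $(\lambda_j)_j$. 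An induction on $|w|$ (or $(w)$ for $Y$) shows $S\trl w=\sum_{j,k}\nu_j\mu(w)_{jk}T_k$ hence $\scal{S}{w}=\scal{S\trl w}{1_{\calX^*}}=\nu\mu(w)\eta$; this exhibits $S$ as $\sum_{w}\nu\mu(w)\eta\, w$, a rational series (its entries are obtained from the rational series $\sum_w \mu(w)_{jk} w$, each satisfying a linear system, by the Kleene-star/concatenation closure of Definition \ref{rational}). In fact this same computation already delivers (2), so the last arrow rational $\Rightarrow$ (2) only needs: the class of series of the form $w\mapsto \nu\mu(w)\eta$ is closed under sum, scalar multiple, concatenation product, and Kleene star of a proper series — the classical block-matrix constructions (direct sum of representations for $+$, tensor/concatenation block form $\begin{pmatrix}\mu_1 & *\\ 0 & \mu_2\end{pmatrix}$ for the product, and the geometric-series block form for $P^*$ when $\scal{P}{1_{\calX^*}}=0$) — and that $\ncp{A}{\calX}$ consists of such series, so the smallest rationally closed class does too.

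The main obstacle is the Kleene-star closure step in the final implication: one must check that if $P$ is a proper rational series ($\scal{P}{1_{\calX^*}}=0$) with representation $(\nu,\mu,\eta)$, then $P^*=\sum_{k\ge0}P^k$ is well defined (properness makes the sum locally finite, using the grading/length function) and admits an explicit finite-dimensional representation; the standard trick is to enlarge the state space by one dimension carrying the "return" and verify $\scal{P^*}{w}$ satisfies the induced recursion. Everything else is bookkeeping with the shift identities $x\rd(wy)=\delta_{x,y}w$ and the associativity/commutativity of shifts recorded in the Remark after Definition \ref{dec1}. Since the statement explicitly cites \cite{jacob-these} for assertion (1) and \cite{berstel} for assertion (2), I would present the shift-invariance direction in full and merely indicate that the Kleene–Schützenberger equivalence (2) is the classical theorem, referring to \cite{berstel} for the detailed matrix constructions.
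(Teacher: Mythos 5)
The paper does not actually prove Theorem \ref{KS}: it states the result and delegates both directions to the cited references (\cite{jacob-these} for the shift-invariance criterion, \cite{berstel} for the Kleene--Sch\"utzenberger equivalence), so there is no in-text argument to compare yours against. Your reconstruction is the standard proof and is correct in outline: the cycle (2) $\Rightarrow$ (1) $\Rightarrow$ (2) via the generators $T_1,\dots,T_n$ of the shift-invariant module is exactly the classical argument (note that finite generation, not freeness, is all you use, which is why it works over the ring $A$ and not just a field), and the closure of recognizable series under the rational operations is precisely what the paper's own Proposition \ref{linearrepresentation} records, including the explicit block representation of $R^*$ that you describe as ``enlarging the state space by one dimension.'' Two points deserve more care than your sketch gives them. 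First, the implication you compress into ``its entries are obtained \dots by the Kleene-star/concatenation closure'' is the genuinely nontrivial half of Kleene--Sch\"utzenberger: showing that a series of the form $w\mapsto\nu\mu(w)\eta$ lies in the rational closure of $\ncp{A}{\calX}$ requires the matrix-star identity $\sum_{w}\mu(w)w=\bigl(\sum_{x\in\calX}\mu(x)x\bigr)^{*}$ together with an induction on $n$ (Arden-type elimination) to express each entry of a matrix star by scalar rational operations; this should be stated, not just gestured at. Second, for $\calX=Y$ the alphabet is infinite, and the rational closure of polynomials under finitely many operations only produces series supported on finitely many letters, whereas a linear representation $\mu:Y^{*}\to M_{n,n}(A)$ may be nonzero on every $y_k$; the equivalence then needs either a restriction to finite subalphabets or a summability convention in Definition \ref{rational}. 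This gap is inherited from the paper's own formulation, but your proof of the direction (1) $\Rightarrow$ rational silently crosses it. With those two caveats made explicit, your argument is a complete and faithful proof of the theorem the paper leaves to the literature.
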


\begin{definition}\label{nil_sol}
\begin{enumerate}
\item Let $\calL$ be the Lie algebra. Then $\calL$ is \textit{nilpotent} (resp. \textit{solvable}) if and only if there exists an integer $k\ge1$ such that the sequence $\{\calL^n\}_{n\ge1}$ (resp. $\{\calL^{(n)}\}_{n\ge1}$), defined recursively by
$\calL^1=\calL,\calL^{n+1}=[\calL,\calL^n]$
(resp. $\calL^{(1)}=\calL,\allowbreak
\calL^{(n+1)}=[\calL^{(n)},\calL^{(n)}])$,
satisfies $\calL^{k+1}=\{0\}$ (resp. $\calL^{(k+1)}=\{0\}$).

\item Let $(\nu,\mu,\eta)$ be a linear representation of $S\in\ncs{A^{\mathrm{rat}}}{\calX}$.
One defines
\begin{enumerate}
\item the Lie algebra generated by $\{\mu(x)\}_{x\calX}$ and denoted by $\calL(\mu)$,
\item the function on monoid $\begin{array}{ccc}
M:\calX^*\longrightarrow M_{n,n}(\ncs{A}{\calX}),&&w\longmapsto\mu(w)w.
\end{array}$
\end{enumerate}
\end{enumerate}
\end{definition}

\begin{proposition}\label{linearrepresentation}
The module ${\ncs{A^{\mathrm{rat}}}\calX}$ (resp. ${\ncs{A^{\mathrm{rat}}}{Y}}$)
is closed by $\shuffle$ (resp. ${\shuffle}_{\phi}$). Moreover, for any $i=1,2$, let
$R_i\in{\ncs{A^{\mathrm{rat}}}\calX}$ and $(\nu_i,\mu_i,\eta_i)$ be its
representation of rank $n_i$. Then the linear representation
$$\begin{array}{rccl}
\mbox{that of}&R_i^*&\mbox{is}&\Big(\begin{pmatrix}0&1\end{pmatrix},
\left\{\begin{pmatrix}\mu_i(x)+\eta_i\nu_i\mu_i(x)&0\cr\nu_i\eta_i&0\end{pmatrix}\right\}_{x\in\calX},
\begin{pmatrix}\eta_i\cr1\end{pmatrix}\Big),\cr
\mbox{that of}&R_1+R_2&\mbox{is}&\Big(\begin{pmatrix}\nu_1&\nu_2\end{pmatrix},
\left\{\begin{pmatrix}\mu_1(x)&{\bf 0}\cr{\bf 0}&\mu_2(x)\end{pmatrix}\right\}_{x\in\calX},
\begin{pmatrix}\eta_1\cr\eta_2\end{pmatrix}\Big),\cr
\mbox{that of}&R_1R_2\quad&\mbox{is}&\Big(\begin{pmatrix}\nu_1&0\end{pmatrix},
\left\{\begin{pmatrix}\mu_1(x)&\eta_1\nu_2\mu_2(x)\cr0&\mu_2(x)\end{pmatrix}\right\}_{x\in\calX},
\begin{pmatrix}\eta_1\mu_2\eta_2\cr\eta_2\end{pmatrix}\Big),\cr
\mbox{that of}&R_1\shuffle R_2&\mbox{is}&(\nu_1\otimes\nu_2,\{\mu_1(x)\otimes\mathrm{I}_{n_2}
+\mathrm{I}_{n_1}\otimes\mu_2(x)\}_{x\in\calX},\eta_1\otimes\eta_2),\\
\mbox{that of}&R_1{\shuffle}_{\phi} R_2&\mbox{is}&(\nu_1\otimes\nu_2,
\{\mu_1(y_k)\otimes\mathrm{I}_{n_2}+\mathrm{I}_{n_1}\otimes\mu_2(y_k)\cr
&&&+\sum\limits_{i+j=k}\gamma_{i,j}^k\mu_1(y_i)\otimes\mu_2(y_j)\}_{k\ge1},\eta_1\otimes\eta_2).
\end{array}$$
\end{proposition}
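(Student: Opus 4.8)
The plan is to split the statement into two tasks: (i) the closure of $\ncs{A^{\mathrm{rat}}}{\calX}$ under $\shuffle$ and of $\ncs{A^{\mathrm{rat}}}{Y}$ under $\shuffle_{\phi}$; and (ii) the five displayed linear representations. Task (i) is then a consequence of task (ii): each displayed triplet consists of a row vector over $A$, a column vector over $A$, and an assignment of square matrices over $A$ to the generators, extended multiplicatively to the free monoid, so it is a linear representation in the sense of Theorem \ref{KS}, and once we check that it computes the coefficients of the series claimed, the same theorem yields rationality. The representations of $R_i^*$, $R_1+R_2$ and $R_1R_2$ are the classical Sch\"utzenberger constructions \cite{berstel}; I would dispatch them by expanding $\nu\mu(w)\eta$ along the block decomposition of $\mu(w)$ and using the morphism property $\mu(uv)=\mu(u)\mu(v)$, summing over the factorizations $w=uv$ in the case of $R_1R_2$ and, in the case of $R_i^*$, invoking the usual properness hypothesis $\scal{R_i}{1_{\calX^*}}=0$, for which $R_i^*=\sum_{n\ge0}R_i^n$ is a well-defined series satisfying $R_i^*=1_{\calX^*}+R_iR_i^*$. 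The genuinely new content is the two shuffle products, on which I would focus.

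For $R_1\shuffle R_2$, set $N=n_1n_2$ and let $\mu\colon\calX^*\to M_{N,N}(A)$ be the monoid morphism with $\mu(x)=\mu_1(x)\otimes\mathrm{I}_{n_2}+\mathrm{I}_{n_1}\otimes\mu_2(x)$ on letters (Kronecker products of matrices). The heart of the argument is the identity
\begin{equation}\label{plan-shuffle}
\mu(w)=\sum_{u,v\in\calX^*}\scal{u\shuffle v}{w}\,\mu_1(u)\otimes\mu_2(v)\qquad(w\in\calX^*),
\end{equation}
which is a finite sum because $\scal{u\shuffle v}{w}\neq0$ forces $\abs{u}+\abs{v}=\abs{w}$ and $\calX$ is finite. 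I would prove \eqref{plan-shuffle} by induction on $\abs{w}$: it is immediate for $\abs{w}\le1$ from $\mu_i(1_{\calX^*})=\mathrm{I}_{n_i}$, and for $w=xw'$ one computes $\mu(xw')=\mu(x)\mu(w')$, distributes the two summands of $\mu(x)$ over the inductive expansion of $\mu(w')$ via $(\mu_1(x)\otimes\mathrm{I})(\mu_1(u')\otimes\mu_2(v'))=\mu_1(xu')\otimes\mu_2(v')$ and $(\mathrm{I}\otimes\mu_2(x))(\mu_1(u')\otimes\mu_2(v'))=\mu_1(u')\otimes\mu_2(xv')$, then regroups by the pair of words; the coefficients reorganise exactly into the $\scal{u\shuffle v}{xw'}$ by the defining recursion $xu_0\shuffle yv_0=x(u_0\shuffle yv_0)+y(xu_0\shuffle v_0)$ applied coefficientwise, the cases $u=1_{\calX^*}$ or $v=1_{\calX^*}$ being handled separately through $u\shuffle1_{\calX^*}=1_{\calX^*}\shuffle u=u$. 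Granting \eqref{plan-shuffle}, applying $(\nu_1\otimes\nu_2)(\,\cdot\,)(\eta_1\otimes\eta_2)$ and using the mixed-product rule $(P\otimes Q)(P'\otimes Q')=PP'\otimes QQ'$ together with the fact that each $\nu_i\mu_i(u)\eta_i$ is a scalar, one obtains $(\nu_1\otimes\nu_2)\mu(w)(\eta_1\otimes\eta_2)=\sum_{u,v}\scal{R_1}{u}\scal{R_2}{v}\scal{u\shuffle v}{w}=\scal{R_1\shuffle R_2}{w}$ by the definition \eqref{extenoverseries2} of $\shuffle$ on series; with Theorem \ref{KS} this settles the shuffle representation and the closure of $\ncs{A^{\mathrm{rat}}}{\calX}$ under $\shuffle$.

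For $R_1\shuffle_{\phi}R_2$ over $Y$, the argument runs the same way \emph{mutatis mutandis} with $\mu(y_k)=\mu_1(y_k)\otimes\mathrm{I}_{n_2}+\mathrm{I}_{n_1}\otimes\mu_2(y_k)+\sum_{i+j=k}\gamma_{i,j}^k\mu_1(y_i)\otimes\mu_2(y_j)$: one proves the $\shuffle_{\phi}$-analogue of \eqref{plan-shuffle} by induction on the weight $(w)$, now distributing the three summands of $\mu(y_k)$, the additional contraction term matching precisely the summand $\phi(y_i,y_j)(u_0\shuffle_{\phi}v_0)$ of the defining recursion of $\shuffle_{\phi}$. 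Finiteness of the relevant sums, hence well-definedness of the series extension of $\shuffle_{\phi}$, holds because $\shuffle_{\phi}$ is homogeneous for the weight, so $(u)+(v)=(w)$, and the compositions of a fixed integer are finite in number.

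On the difficulty: no step is deep. The three classical cases are bookkeeping on block matrices, and both shuffle cases reduce to the single identity \eqref{plan-shuffle} (and its $\phi$-deformation). The point needing the most care is the regrouping in the inductive step --- tracking from which tensor slot the leading letter is removed, matching the empty-word edge cases against $u\shuffle1=1\shuffle u=u$ (resp. $u\shuffle_{\phi}1=1\shuffle_{\phi}u=u$), and, over $Y$, using the weight-homogeneity of $\shuffle_{\phi}$ so that \eqref{plan-shuffle}, the definition of $\shuffle_{\phi}$ on series, and the final summation all remain finitely supported. A more conceptual repackaging, which I would also mention, writes $R_i=\nu_i\bigl(\sum_{w\in\calX^*}\mu_i(w)w\bigr)\eta_i$ with $M_i=\sum_{w}\mu_i(w)w\in M_{n_i,n_i}(\ncs{A}{\calX})$ (compare the morphism $w\mapsto\mu(w)w$ of Definition \ref{nil_sol}) and recognises $\sum_{w}\mu(w)w$ as the image of $M_1,M_2$ under the bilinear extension of $\shuffle$, resp. $\shuffle_{\phi}$, to matrices of series --- which reorganises, but does not remove, the computation underlying \eqref{plan-shuffle}.
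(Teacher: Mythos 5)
Your proposal is correct and takes essentially the approach the paper intends: the paper's proof simply cites the classical Sch\"utzenberger constructions and notes that the representations of $R_1\shuffle R_2$ and $R_1\shuffle_{\phi}R_2$ ``base on coproducts and tensor products of representations'', and your key identity $\mu(w)=\sum_{u,v}\scal{u\shuffle v}{w}\,\mu_1(u)\otimes\mu_2(v)$ is exactly the statement $\mu=(\mu_1\otimes\mu_2)\circ\Delta_{\shuffle}$ (and its $\phi$-deformation) made explicit and verified by induction. Your treatment of the edge cases (properness for $R_i^*$, weight-homogeneity of $\shuffle_{\phi}$ for finiteness over the infinite alphabet $Y$) is sound and supplies details the paper leaves to the references.
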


\begin{proof}
The constructions of linear representations are classical \cite{jacob} (the representations of $R_1\shuffle R_2$ and $R_1{\shuffle}_{\phi} R_2$ base on coproducts and tensor products of representations). Only the last one is new.
\end{proof}

\begin{corollary}[Factorization and decomposition, \cite{CM}]\label{factorized}
Let $S\in\ncs{A^{\mathrm{rat}}}{\calX}$ of a linear representation $(\nu,\mu,\eta)$. Then, with Notations in Definition \ref{nil_sol},
\begin{enumerate}
\item $S=\nu M(\calX^*)\eta$ and
\begin{eqnarray*}
M(\calX^*)=\prod_{l\in\Lyn\calX}^{\searrow}e^{\mu(P_ l)S_l}
&\Big(\mbox{resp.}&
M(Y^*)=\prod_{l\in\Lyn Y}^{\searrow}e^{\mu(\Pi_ l)\Sigma_l}\Big).
\end{eqnarray*}

\item If $\{M(x)\}_{x\in\calX}$ are upper triangular  then $S=\nu((D(\calX^*)N(\calX))^*D(\calX^*)\eta$, where $N(\calX)$ (resp. $D(\calX)$) is a strictly upper triangular (resp. diagonal) matrix such that $M(\calX)=N(\calX)+D(\calX)$. Moreover, $D(\calX^*)$ is diagonal and there is a positive interger $k$ such that $D(\calX^*)N(\calX)$ is nilpotent of order $k$ and then
$S=\nu(I_n+D(\calX^*)N(\calX^*)+\ldots+(D(\calX^*)N(\calX^*))^k)D(\calX^*)\eta$.
\end{enumerate}
\end{corollary}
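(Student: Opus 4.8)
The plan is to read $M(\calX^*)$ as the matrix of series $\sum_{w\in\calX^*}M(w)=\sum_{w\in\calX^*}\mu(w)\,w$ and to exploit that $M$ is a morphism from the monoid $\calX^*$ into the multiplicative monoid of $M_{n,n}(\ncs{A}{\calX})$. First I would check this morphism property: since $\mu(uv)=\mu(u)\mu(v)$ and a word $u\in\calX^*$, viewed as a scalar of $\ncs{A}{\calX}$, is central in $M_{n,n}(\ncs{A}{\calX})$, one gets $M(u)M(v)=\mu(u)\,u\,\mu(v)\,v=\mu(u)\mu(v)\,uv=M(uv)$. It then follows at once that $\nu M(\calX^*)\eta=\sum_{w\in\calX^*}\nu\mu(w)\eta\,w=\sum_{w\in\calX^*}\scal{S}{w}\,w=S$, which is the first equality of item~1.

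To obtain the factorization, I would introduce the $A$-linear map $\Theta\colon\ncs{A}{\calX^*\otimes\calX^*}\longrightarrow M_{n,n}(\ncs{A}{\calX})$ determined on the monomial basis by $\Theta(u\otimes v)=\mu(v)\,u$. Using once more that words act as central scalars and that $\mu(vv')=\mu(v)\mu(v')$, one verifies that $\Theta$ is a morphism of algebras when the source carries the tensor-of-concatenation product $(u\otimes v)(u'\otimes v')=uu'\otimes vv'$; moreover $\Theta$ sends a homogeneous component of bidegree $(p,q)$ into valuation at least $p$, so it is continuous and hence commutes with exponentials and with the decreasingly ordered infinite products $\prod^{\searrow}$. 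Since $\Theta(S_l\otimes P_l)=\mu(P_l)S_l$, applying $\Theta$ to the factorization \eqref{diagonalX} of the diagonal series $\calD_\calX$ gives
\begin{eqnarray*}
M(\calX^*)&=&\Theta(\calD_\calX)=\Theta\Big(\prod_{l\in\Lyn\calX}^{\searrow}e^{S_l\otimes P_l}\Big)=\prod_{l\in\Lyn\calX}^{\searrow}e^{\mu(P_l)S_l}.
\end{eqnarray*}
The case of $Y$ is handled word for word, applying the analogous $\Theta$ to \eqref{diagonalY} and using the bases $\{\Sigma_l\}_{l\in\Lyn Y}$ and $\{\Pi_l\}_{l\in\Lyn Y}$; this proves item~1.

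For item~2, I would first note, using again that $M$ is a monoid morphism, that $M(\calX^*)=\sum_{k\ge0}\big(\sum_{x\in\calX}M(x)\big)^k=M(\calX)^*$ with $M(\calX)=\sum_{x\in\calX}\mu(x)\,x$; since every word occurring here is non-empty, $M(\calX)$ has no constant term and all Kleene stars below are well-defined in $M_{n,n}(\ncs{A}{\calX})$. The hypothesis that the $M(x)$, equivalently the $\mu(x)$, are upper triangular allows me to split $M(\calX)=N(\calX)+D(\calX)$ with $D(\calX)$ diagonal and $N(\calX)$ strictly upper triangular, and the classical identity $(a+b)^*=(a^*b)^*a^*$, valid in a ring of topologically nilpotent elements, gives $M(\calX^*)=(D(\calX^*)N(\calX))^*D(\calX^*)$ with $D(\calX^*):=D(\calX)^*$ still diagonal. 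Finally $D(\calX^*)N(\calX)$ is strictly upper triangular, hence an $n\times n$ nilpotent matrix, say of order $k\le n$, so that $(D(\calX^*)N(\calX))^*=I_n+D(\calX^*)N(\calX)+\cdots+(D(\calX^*)N(\calX))^k$ is a finite sum; left-multiplication by $\nu$ and right-multiplication by $\eta$ then turns $S=\nu M(\calX^*)\eta$ into the announced explicit expression.

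I do not expect a genuine obstacle here: the argument merely transports the known factorizations \eqref{diagonalX} and \eqref{diagonalY} along the two morphisms $M$ and $\Theta$, and then performs one elementary Kleene-star manipulation. The only points deserving attention are formal — the convergence of the products $\prod^{\searrow}$ and of the stars (guaranteed by the grading by length, resp.\ weight, and by the vanishing of constant terms), the continuity of $\Theta$, and the reading of ``$\{M(x)\}_{x\in\calX}$ upper triangular'' as ``$\{\mu(x)\}_{x\in\calX}$ upper triangular'' — and none of these is problematic.
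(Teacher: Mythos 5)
Your proof is correct and follows essentially the same route as the paper: your map $\Theta(u\otimes v)=\mu(v)\,u$ is exactly the paper's $(\mathrm{Id}\otimes\mu)$ applied to the factorized diagonal series $\calD_{\calX}$ (resp. $\calD_Y$), and your identity $(D(\calX)+N(\calX))^*=(D(\calX)^*N(\calX))^*D(\calX)^*$ together with the nilpotence of the strictly upper triangular part is precisely the Lazard factorization the paper invokes for item~2. You merely make explicit the continuity and morphism checks that the paper leaves implicit.
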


\begin{proof}
\begin{enumerate}
\item One has  $M(\calX^*)=(\mathrm{Id}\otimes\mu)\calD_{\calX}$ (resp. $M(Y^*)=(\mathrm{Id}\otimes\mu)\calD_Y$).

\item By Lazard factorization \cite{reutenauer}, it follows the expected results.
\end{enumerate}
\end{proof}

\begin{proposition}[\cite{CM}]\label{PQ}
With Notations in Theorem \ref{KS}, let $G_i$ (resp $D_i$) belong to $\ncs{A^{\mathrm{rat}}}{\calX}$ admitting $(\nu,\mu,e_i)$ (resp. $({}^te_i,\mu,\eta)$) as linear representation of rank $n$ ($1\le i\le n$), where 
$e_i\in M_{1,n}(A)$ and $\underset{\hskip1cm\uparrow i}{{}^te_i=
{\begin{matrix}(0&\ldots&0&1&0&\ldots&0)\end{matrix}}}$. Then
\begin{eqnarray*}
\\[-5mm]
\Delta_{\conc}S=\sum_{1\le i\le n}G_i\otimes D_i.
\end{eqnarray*}
\end{proposition}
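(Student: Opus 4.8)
The plan is to verify the claimed identity coefficient by coefficient in $\ncs{A}{\calX^*\otimes\calX^*}$; the only genuine content is that rationality of $S$ collapses the a~priori infinite deconcatenation coproduct into a \emph{finite} sum of tensors.

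First I would recall that $\Delta_{\conc}$ is the deconcatenation coproduct, $\Delta_{\conc}(w)=\sum_{uv=w}u\otimes v$, so that its extension to series (Section~\ref{coproducts}) satisfies $\scal{\Delta_{\conc}(S)}{u\otimes v}=\scal{S}{uv}$ for all $u,v\in\calX^*$; this is precisely \eqref{Delta_f} read at the level of the series $S$. Hence it suffices to prove that $\scal{S}{uv}=\sum_{1\le i\le n}\scal{G_i}{u}\,\scal{D_i}{v}$ for all $u,v\in\calX^*$.

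Next I would use that the rank-$n$ linear representation $(\nu,\mu,\eta)$ of $S$ supplied by Theorem~\ref{KS} has $\mu:\calX^*\to M_{n,n}(A)$ a morphism of monoids, whence $\scal{S}{uv}=\nu\mu(uv)\eta=\nu\mu(u)\mu(v)\eta$. Inserting the resolution of the identity $\mathrm{I}_n=\sum_{i=1}^ne_i\,{}^te_i$ between $\mu(u)$ and $\mu(v)$ gives $\scal{S}{uv}=\sum_{i=1}^n\bigl(\nu\mu(u)e_i\bigr)\bigl({}^te_i\mu(v)\eta\bigr)$, and by the very choice of the triplets in the statement $\nu\mu(u)e_i=\scal{G_i}{u}$ and ${}^te_i\mu(v)\eta=\scal{D_i}{v}$. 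Therefore $\scal{\Delta_{\conc}(S)}{u\otimes v}=\sum_{i=1}^n\scal{G_i}{u}\scal{D_i}{v}=\scal{\sum_{i=1}^nG_i\otimes D_i}{u\otimes v}$; since $u,v$ were arbitrary, the two series of $\ncs{A}{\calX^*\otimes\calX^*}$ coincide, which is the assertion. That each $G_i$ (resp.\ $D_i$) is rational is immediate from Theorem~\ref{KS} applied to the rank-$n$ representation $(\nu,\mu,e_i)$ (resp.\ $({}^te_i,\mu,\eta)$), and the case $\calX=Y$ is literally the same argument for the same concatenation coproduct on $\ncs{A}{Y}$.

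I do not anticipate any substantial obstacle: the heart of the matter is the single matrix identity $\mu(uv)=\mu(u)\,\mathrm{I}_n\,\mu(v)$ together with $\mathrm{I}_n=\sum_ie_i\,{}^te_i$. The only point that wants care is keeping the matrix conventions straight (with $\nu$ a row vector, $\eta$ a column vector, $e_i$ the $i$-th column of $\mathrm{I}_n$ and ${}^te_i$ its transpose) so that the scalars $\nu\mu(u)e_i$ and ${}^te_i\mu(v)\eta$ are paired correctly against $u\otimes v$. Conceptually this is exactly the statement that a rational, equivalently representative, series lies in Sweedler's dual $\calH_{\shuffle}^{\circ}(\calX)$ (resp.\ $\calH_{{\shuffle}_{\phi}}^{\circ}(Y)$) of Definition~\ref{dec1bis}, the finite family $\{G_i,D_i\}_{1\le i\le n}$ being read off the columns and the rows of the matrix representation $\mu$ of $S$.
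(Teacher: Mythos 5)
Your proof is correct and follows essentially the same route as the paper: both evaluate $\scal{\Delta_{\conc}S}{u\otimes v}=\scal{S}{uv}=\nu\mu(u)\mu(v)\eta$ and split the product via the resolution of the identity $\mathrm{I}_n=\sum_{i=1}^n e_i\,{}^te_i$ to recognize $\scal{G_i}{u}$ and $\scal{D_i}{v}$. Your version is if anything slightly more explicit than the paper's about why the insertion of $\mathrm{I}_n$ is the key step.
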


\begin{proof}
The proof given in \cite{CM} is formulated, for any $u$ and $v\in\calX^*$, as follows
\begin{eqnarray*}
\scal{S}{uv}=\beta\mu(u)\mu(v)\eta=\sum_{1\le i\le n}(\nu\mu(u)e_i)({}^te_i\mu(v)\eta)=\sum_{1\le i\le n}\scal{G_i}{u}\scal{D_i}{v},\\
\scal{\Delta_{\tt conc}S}{u\otimes v}=\scal{S}{uv}=\sum_{1\le i\le n}\scal{G_i}{u}\scal{D_i}{v}=\sum_{1\le i\le n}\scal{G_i\otimes D_i}{u\otimes v}.
\end{eqnarray*}
Extending by linearity and then by $\Delta_{\conc}$, it follows the expected result, since
\begin{eqnarray*}
\forall P,Q\in\ncp{A}{\calX},&&
\scal{S}{PQ}=\sum_{1\le i\le n}\scal{G_i}{P}\scal{D_i}{Q}.
\end{eqnarray*}
\end{proof}

\begin{corollary}\label{Sweedler}
\begin{enumerate}
\item With Notations of Definition \ref{dec1bis}, Propositions \ref{linearrepresentation}--\ref{PQ}, one has
\begin{enumerate}
\item ${\ncs{A^{\mathrm{rat}}}\calX}$ is an unital $A$-algebra with respected to one of $\{\conc,\shuffle,{\shuffle}_{\phi}\}$.

\item The following criterion characterizes rational (or representative) series
\begin{eqnarray*}
S\in\ncs{A^{\mathrm{rat}}}{\calX}\iff
\Delta_{\conc}S=\sum_{i\in I_f}G_i\otimes D_i.
\end{eqnarray*}
\end{enumerate}

\item $\calH_{\shuffle}^{\circ}(\calX)$ (resp. $\calH_{{\shuffle}_{\phi}}^{\circ}(Y)$) is isomorphic to $(\ncs{K^{\mathrm{rat}}}{\calX},\shuffle,1_{\calX^*},\Delta_{\conc})$ (resp. $(\ncs{K^{\mathrm{rat}}}{Y},{\shuffle}_{\phi},1_{Y^*},\Delta_{\conc})$) of rational (or representative) series.
\end{enumerate}
\end{corollary}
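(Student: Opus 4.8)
The plan is to prove Corollary \ref{Sweedler} by assembling the pieces already established in the preceding results, treating the two numbered items separately.

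\medskip

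\noindent\textbf{Proof of item 1.} Part (a) is exactly the closure statement of Proposition \ref{linearrepresentation}: the explicit linear representations exhibited there for $R_1\conc R_2$, $R_1\shuffle R_2$ and $R_1{\shuffle}_{\phi}R_2$ show that ${\ncs{A^{\mathrm{rat}}}{\calX}}$ is stable under each of the three products, while the constant series $1_{\calX^*}$ (with the trivial rank-$1$ representation $(\,1\,,x\mapsto 0,\,1\,)$) is the two-sided unit for all of them, and associativity was recorded in Section \ref{coproducts}. For part (b), the forward implication is Proposition \ref{PQ}: a rational $S$ with linear representation $(\nu,\mu,\eta)$ of rank $n$ satisfies $\Delta_{\conc}S=\sum_{1\le i\le n}G_i\otimes D_i$ with $G_i,D_i$ the rational series attached to $(\nu,\mu,e_i)$ and $({}^te_i,\mu,\eta)$. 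For the converse, suppose $\Delta_{\conc}S=\sum_{i\in I_f}G_i\otimes D_i$ with $I_f$ finite; evaluating against $w\otimes 1_{\calX^*}$ gives, for every $w\in\calX^*$ and every $v\in\calX^*$,
\begin{eqnarray*}
\scal{S\trl v}{w}=\scal{S}{vw}=\scal{\Delta_{\conc}S}{v\otimes w}=\sum_{i\in I_f}\scal{G_i}{v}\scal{D_i}{w},
\end{eqnarray*}
so every left shift $S\trl v$ lies in the $A$-module $\sum_{i\in I_f}A\,D_i$, which is finitely generated; moreover this module is shift-invariant because $(S\trl v)\trl v'=S\trl(v'v)$ again lies in it. By Theorem \ref{KS}(1) (the Jacob criterion), $S$ is rational. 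This is the only implication requiring an argument, and the subtlety to watch is that one must check shift-invariance of the generated module, not merely finite generation of the raw family of shifts; Remark \ref{dec2}(2) is the tool that makes this automatic.

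\medskip

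\noindent\textbf{Proof of item 2.} Here I would argue that, over a field $K$, the Sweedler dual $\calH_{\shuffle}^{\circ}(\calX)$ coincides, as a set, with $\ncs{K^{\mathrm{rat}}}{\calX}$, and then that the algebra and coalgebra structures match. By Definition \ref{dec1bis}, $S\in\calH_{\shuffle}^{\circ}(\calX)$ iff $\Delta_{\conc}S$ lies in the algebraic tensor square $\ncs{K}{\calX}\otimes\ncs{K}{\calX}$, i.e. iff $\Delta_{\conc}S=\sum_{i\in I}G_i\otimes D_i$ for some finite $I$; by item 1(b) this is precisely the condition $S\in\ncs{K^{\mathrm{rat}}}{\calX}$. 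Thus the two underlying $K$-modules are equal. The multiplication on $\calH_{\shuffle}^{\circ}(\calX)$, being the dual of $\Delta_{\shuffle}$ on $\calH_{\shuffle}(\calX)=(\ncp{K}{\calX},\conc,\Delta_{\shuffle})$, is by construction the shuffle product $\shuffle$ of series restricted to this module — which is well-defined there by item 1(a) — with unit $1_{\calX^*}$; and the comultiplication on the Sweedler dual of a bialgebra is dual to the multiplication of $\calH_{\shuffle}(\calX)$, namely $\conc$, hence equals $\Delta_{\conc}$. Therefore $\calH_{\shuffle}^{\circ}(\calX)=(\ncs{K^{\mathrm{rat}}}{\calX},\shuffle,1_{\calX^*},\Delta_{\conc})$ as bialgebras, and the identity map is the asserted isomorphism. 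The argument for $\calH_{{\shuffle}_{\phi}}^{\circ}(Y)$ is verbatim the same, using the ${\shuffle}_{\phi}$-rows of Proposition \ref{linearrepresentation}, the ${\shuffle}_{\phi}$-closure in item 1, and the fact that $\Delta_{{\shuffle}_{\phi}}$ is dual to $\conc$ on $\calH_{{\shuffle}_{\phi}}(Y)$.

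\medskip

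\noindent The main obstacle is the converse in item 1(b): one needs the equivalence between ``$\Delta_{\conc}S$ decomposes as a \emph{finite} sum of elementary tensors'' and rationality, and the nontrivial direction is recognising that such a finite decomposition forces the shifts of $S$ into a finitely generated shift-stable module so that Theorem \ref{KS} applies; everything else (the structure-transport in item 2, the closure statements) is a matter of unwinding definitions and quoting Propositions \ref{linearrepresentation} and \ref{PQ}. One should also note explicitly that item 2 requires $A=K$ a field, since the Sweedler dual and the Jacob/Kleene--Sch\"utzenberger correspondence in the form used here presuppose working over a field, whereas item 1 holds over any $A\supseteq\Q$.
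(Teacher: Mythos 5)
Your proposal is correct and follows essentially the same route as the paper, whose proof is just a one-line appeal to Propositions \ref{linearrepresentation}--\ref{PQ}, Theorem \ref{KS} and Remark \ref{dec2}; you have merely written out the details (closure via the explicit linear representations, the forward direction of 1(b) from Proposition \ref{PQ}, the converse via finitely generated shift-invariant modules and Theorem \ref{KS}(1), and the identification of underlying modules and structures for item 2 over $A=K$). The only point to tighten is the shift-invariance step in 1(b): the module you should invoke is the one generated by the shifts $\{S\trl v\}_{v\in\calX^*}$ (which is shift-invariant by $(S\trl v)\trl v'=S\trl(vv')$ and, over a field, finite-dimensional as a subspace of $\sum_{i\in I_f}K D_i$), rather than $\sum_{i\in I_f}A D_i$ itself, whose shift-invariance your stated reason does not establish.
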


\begin{proof}
\begin{enumerate}
\item These are consequences of Propositions \ref{linearrepresentation}--\ref{PQ}, respectively.

\item Previous criterion yields the expected results for $A=K$ (see Remark \ref{dec2}). 
\end{enumerate}
\end{proof}

\end{document}